\documentclass[12pt]{amsart}

\newtheorem{theorem}{Theorem}[section]
\newtheorem{Theorem}[theorem]{Theorem}
\newtheorem{Problem}[theorem]{Problem}
\newcommand{\BProblem}{\begin{Problem}}
\newcommand{\EProblem}{\end{Problem}}
\newtheorem{Vermutung}[theorem]{Vermutung}
\newtheorem{Folgerung}[theorem]{Folgerung}
\newtheorem{Conjecture}[theorem]{Conjecture}
\newtheorem{Proposition}[theorem]{Proposition}
\newtheorem{Corollary}[theorem]{Corollary}
\newtheorem{Korollar}[theorem]{Korollar}
\newtheorem{Lemma}[theorem]{Lemma}
\newtheorem{Satz}[theorem]{Satz}
\newcommand{\BSatz}{\begin{Satz}}
\newcommand{\ESatz}{\end{Satz}}

\newtheorem{Hauptsatz}[theorem]{Hauptsatz}
\newcommand{\BHauptsatz}{\begin{Hauptsatz}}
\newcommand{\EHauptsatz}{\end{Hauptsatz}}

\newtheorem{Klassifikationssatz}[theorem]{Klassifikationssatz}
\newcommand{\BKlassifikationssatz}{\begin{Klassifikationssatz}}
\newcommand{\EKlassifikationssatz}{\end{Klassifikationssatz}}
\newtheorem{Hilfssatz}[theorem]{Hilfssatz}
\newcommand{\BHilfssatz}{\begin{Hilfssatz}}
\newcommand{\EHilfssatz}{\end{Hilfssatz}}

\newtheorem{Corollary (of the proof)}[theorem]{Corollary (of the proof)}

\theoremstyle{definition}
\newtheorem{Definition}[theorem]{Definition}

\theoremstyle{remark}
\newtheorem{Remark}[theorem]{Remark}

\newcommand{\ERemark}{\end{Remark}}
\newcommand{\BRemark}{\begin{Remark}}
\newtheorem{Remarks}[theorem]{Remarks}
\newcommand{\ERemarks}{\end{Remarks}}
\newcommand{\BRemarks}{\begin{Remarks}}

\newtheorem{Ramifications}[theorem]{Ramifications}
\newcommand{\BRamifications}{\begin{Ramifications}}
\newcommand{\ERamifications}{\end{Ramifications}}
\newtheorem{Bemerkung}[theorem]{Bemerkung}
\newcommand{\BBemerkung}{\begin{Bemerkung}}
\newcommand{\EBemerkung}{\end{Bemerkung}}

\newtheorem{Anschauung}[theorem]{Anschauung}
\newcommand{\BAnschauung}{\begin{Anschauung}}
\newcommand{\EAnschauung}{\end{Anschauung}}

\newtheorem{Bemerkungen}[theorem]{Bemerkungen}
\newcommand{\BBemerkungen}{\begin{Bemerkungen}}
\newcommand{\EBemerkungen}{\end{Bemerkungen}}
\newtheorem{Kommentar}[theorem]{Kommentar}
\newcommand{\BKommentar}{\begin{Kommentar}}
\newcommand{\EKommentar}{\end{Kommentar}}
\newtheorem{Notation}[theorem]{Notation}
\newcommand{\BNotation}{\begin{Notation}}
\newcommand{\ENotation}{\end{Notation}}
\newtheorem{Behauptung}[theorem]{Behauptung}
\newcommand{\BBehauptung}{\begin{Behauptung}}
\newcommand{\EBehauptung}{\end{Behauptung}}
\newtheorem{Beispiel}[theorem]{Beispiel}
\newcommand{\BBeispiel}{\begin{Beispiel}}
\newcommand{\EBeispiel}{\end{Beispiel}}

\newtheorem{Motivation}[theorem]{Motivation}
\newcommand{\BMotivation}{\begin{Motivation}}
\newcommand{\EMotivation}{\end{Motivation}}

\newtheorem{Beispiele}[theorem]{Beispiele}
\newcommand{\BBeispiele}{\begin{Beispiele}}
\newcommand{\EBeispiele}{\end{Beispiele}}
\newtheorem{Example}[theorem]{Example}
\newcommand{\EExample}{\end{Example}}
\newcommand{\BExample}{\begin{Example}}
\newtheorem{Examples}[theorem]{Examples}
\newcommand{\EExamples}{\end{Examples}}
\newcommand{\BExamples}{\begin{Examples}}
\newtheorem{Exercise}[theorem]{Exercise}
\newcommand{\EExercise}{\end{Exercise}}
\newcommand{\BExercise}{\begin{Exercise}}
\newtheorem{Ubung}[theorem]{"Ubung}
\newcommand{\EUbung}{\end{Ubung}}
\newcommand{\BUbung}{\begin{Ubung}}
\newtheorem{Ubungen}[theorem]{"Ubungen}
\newcommand{\EUbungen}{\end{Ubungen}}
\newcommand{\BUbungen}{\begin{Ubungen}}
\newtheorem{Exercises}[theorem]{Exercises}
\newcommand{\EExercises}{\end{Exercises}}
\newcommand{\BExercises}{\begin{Exercises}}
\newtheorem{Claim}[theorem]{Claim}

\newcommand{\BTheorem}{\begin{Theorem}}
\newcommand{\ETheorem}{\end{Theorem}}
\newcommand{\BVermutung}{\begin{Vermutung}}
\newcommand{\EVermutung}{\end{Vermutung}}
\newcommand{\BFolgerung}{\begin{Folgerung}}
\newcommand{\EFolgerung}{\end{Folgerung}}
\newcommand{\BConjecture}{\begin{Conjecture}}
\newcommand{\EConjecture}{\end{Conjecture}}
\newcommand{\BProposition}{\begin{Proposition}}
\newcommand{\EProposition}{\end{Proposition}}
\newcommand{\BCorollary}{\begin{Corollary}}
\newcommand{\ECorollary}{\end{Corollary}}
\newcommand{\BKorollar}{\begin{Korollar}}
\newcommand{\EKorollar}{\end{Korollar}}
\newcommand{\BDefinition}{\begin{Definition}}
\newcommand{\EDefinition}{\end{Definition}}
\newcommand{\BLemma}{\begin{Lemma}}
\newcommand{\ELemma}{\end{Lemma}}
\newcommand{\BClaim}{\begin{Claim}}
\newcommand{\EClaim}{\end{Claim}}









\newcommand{\da}{\downarrow}








\newcommand{\io}{\iota}

\newcommand{\la}{\lambda}

\newcommand{\al}{\alpha}

\newcommand{\ve}{\varepsilon}
\newcommand{\va}{\varphi}


\newcommand{\DZ}{{\mathbb Z}}





\numberwithin{equation}{subsection}
\begin{document}
\title{Littelmann's refined Demazure character formula revisited }
\author{Steen Ryom-Hansen}
\address{Matematisk Afdeling, Universitetsparken 5 \\
DK-2100 K{\o}benhavn \O, Danmark \\ steen@math.ku.dk }
\begin{abstract}{We give a purely combinatorial derivation 
of Littelmann's refined Demazure character formula. }
\end{abstract}

\maketitle

\thispagestyle{myheadings}
\font\rms=cmr8
\font\its=cmti8
\font\bfs=cmbx8
\markright{\its S\'eminaire Lotharingien de
Combinatoire \bfs 49 \rms (2003), Article~B49d\hfill}
\def\thepage{}

\section{Introduction}
The Demazure character formula is 
a generalization of Weyl's character 
formula. It was first stated by Demazure 
in [D], who showed that it would follow from a certain string 
property. However, it turned out that this property did not
hold in the original setting. The first correct 
proofs of the formula were therefore given by Andersen, [A] and 
Ramanan-Ramanathan [RR], using 
methods closely related to Frobenius splitting.

\medskip
This work is concerned with the crystal basis 
approach to the Demazure character formula.
In that setting the string property indeed {\it does} hold 
as demonstrated by 
Kashiwara in [K1]. We briefly review the deduction 
of the character formula from it. 

\medskip
We then go on to 
show that the string property can be obtained 
using only combinatorial properties of the crystals:
the Kashiwara operators $ \tilde{e}_i $, $ \tilde{f}_i $, together with 
the $*$-operation. 
This is different from the previous deductions of 
the formula, which use either a representation theoretical 
interpretation of the formula or appeal to Littelmann's path models. 
Our deduction should be contrasted with the remarks following 
Proposition {\bf 6.3.10} in Joseph's book, [J1].

\medskip

I would like to thank the referee for many useful suggestions.

\section{The refined Demazure character formula}

\subsection{}

Let us briefly recall the notion of crystal as introduced by 
Kashiwara. We refer to [K1,K2,J] for all unexplained notation. 
Let $ C:=(c_{i,j } )_{i,j \in I}  $ be a generalized Cartan matrix.
Crystals are certain combinatorial objects associated to $ C$.
They consist of a set $ B$ with maps 
$ \tilde{e}_i, \tilde{f}_i : B \rightarrow B \cup \{ 0 \}  $ 
and maps $ \epsilon_i, \varphi_i : B  \rightarrow { \mathbb Z } \cup
\{- \infty \} $, $  wt_i : B \rightarrow P $,   $ \forall
i \in I $, that satisfy certain conditions.

\medskip 
There is a crystal $B(\lambda) $ associated to 
the Weyl module $V(\lambda)$ of the quantized universal algebra 
$U_q({\mathfrak g})$. The limit crystal is called $B(\infty) $.

\medskip 

Given two crystals $B_1 $ and $ B_2 $ one can make $ B_1 \times B_2 $
into a crystal, which is called 
the tensor product $ B_1 \otimes B_2 $. For example we have that 

$$
 \tilde{f}_i(b_1 \otimes b_2)  = \left\{
\begin{array}{ll} 
\tilde{f}_i b_1 \otimes b_2
& \mbox{  if }   \varphi_i( b_1 ) >  \epsilon_i(b_2), \\ 
 b_1 \otimes \tilde{f}_ib_2
&    \mbox{  otherwise. } 
\end{array} \right.
$$

\pagenumbering{arabic}
\addtocounter{page}{1}
\markboth{\SMALL STEEN RYOM--HANSEN}{\SMALL LITTELMANN'S REFINED DEMAZURE 
CHARACTER FORMULA}

There is also a sum construction.
(But notice that not all crystals arise from 
the representation theoretical crystals using such constructions).

\medskip 

We shall mainly view crystals as combinatorial objects in the above sense,
but shall also appeal to Kashiwara's $*$-operation on $ B(\infty) $ (see [K1]).
We first of all need the following property:
for all $ i \in I $ there is an injective morphism of crystals $ \Psi_i : B(\infty ) \rightarrow 
B(\infty ) \otimes B_i $ where $ B_i $ is the crystal defined 
in example 1.2.6. of [K1]. It satisfies the following 
conditions
\begin{gather}~\label{eq1}
\Psi_i : u_{\infty} \mapsto u_{\infty} \otimes b_i ,
\\
\label{eq2}
\Psi_i ( \tilde{f}_i^{\ast} b ) = b^{\prime} \otimes \tilde{f}_i b^{\prime \prime} 
\text{ where  } \Psi_i ( b ) = b^{\prime} \otimes b^{\prime \prime},\\
\label{eq3}
\tilde{f}_i \Psi_i ( b) = \Psi_i ( \tilde{f}_i b ) \text{  and  } 
\tilde{e}_i \Psi_i ( b ) = \Psi_i( \tilde{e}_i b ) ,
\end{gather}
where $ u_{\infty} $ is the unique element of $ B_{\infty}  $ of
weight $ 0 $, and 
where $ B ( \infty ) \otimes B_i $ has the above structure of a 
tensor product. Joseph has given a purely combinatorial proof of the 
existence of $ \Psi_i$, [J2].

\medskip

Now, for a reduced expression 
$ s_{i_n}  s_{i_{n-1}} \ldots  s_{i_1} $ of the Weyl group element
$w$, 
we define $ B_w ( \infty ) \subset B ( \infty ) $ and 
$ B_w ( \la  ) \subset B ( \la ) $ in the following recursive way 
\begin{gather*}
   B_w ( \infty ) := \bigcup_k \tilde{f}^k_{i_{n}} B_{s_{i_n}w} (\infty),
\,\,\,\,\,\,\,B_1 ( \infty ) := \{u_{\infty } \}, \\
   B_w ( \la ) := \bigcup_k \tilde{f}^k_{i_{n}} B_{s_{i_n}w} (\la),
\,\,\,\,\,\,\, B_1 ( \la ) := \{ u_{\la } \} .
\end{gather*}
A priori, these definitions might depend on the choice of reduced expression
$ s_{i_n}  s_{i_{n-1}} \ldots  s_{i_1} $ of $w$. We shall later 
show that in fact $ B_w ( \infty ) $ and $ B_w ( \la ) $ are independent 
of this choice.

\subsection{}

Let $ {\mathcal D}_i $ be the additive operator on $ \DZ [ B( \la ) ] $ given by 
$$
{\mathcal D}_i b = \left\{
\begin{array}{lll} 
\sum_{ 0 \leq k \leq wt_i (b) } \tilde{f}^k_i b 
& \mbox{if} &   wt_i( b ) \geq 0, \\ 
 - \sum_{1 \leq k \leq -wt_i (b)-1 } \tilde{e}^k_i b 
& \mbox{if} &   wt_i( b ) < 0 .
\end{array} \right.
$$
Then the refined Demazure character formula, [K1,L1],
is the following equality 
in $ {\DZ} [B(\la)] $
\begin{equation}~\label{demazureformel}
 \sum_{b \in B_w ( \la ) } b = {\mathcal D}_{i_n} {\mathcal D}_{i_{n-1}} \ldots 
{\mathcal D}_{i_1} u_{\la} .
\end{equation}

The $ {\mathcal D }_i $'s induce the usual Demazure operators on the group 
ring of the weight lattice $ \DZ [P ] $ under the weight map 
$ w: \DZ [ B( \la ) ] \rightarrow \DZ [P ] $. Thus, if  $ W $ is 
finite and we take $ w = w_0 $ the 
longest element of the Weyl group, (\ref{demazureformel}) 
generalizes the original Demazure 
expression of the Weyl character, see e.g. [A]. 

\medskip

\subsection{}~\label{threeproperties}
In the rest of this section we shall review Kashiwara's proof of 
(\ref{demazureformel}). The idea is to 
reduce to the verification of the following three properties of 
$ B_w ( \la )$:

\begin{center}
\begin{enumerate}

\item $ B_w^{\ast}( \infty )  = B_{w^{-1}} ( \infty ) $,

\item $ {\tilde{e}}_i B_w ( \infty ) \subset B_w ( \infty ) \cup\{0 \} 
\,\,\,\,\,\, \forall i \in I $,

\item $ {\tilde{f}}_j b \in B_w  (\infty ) \Rightarrow \tilde{f}_j^k b \in 
B_w ( \infty ) \,\, \, \,\,  \,  \forall b \in B_w(\infty),  \forall k \in {\mathbb N},
\forall j \in I $ .

\end{enumerate} 
\end{center}

Let us denote any subset of  
$ B( \infty ) $ or of $ B(\la ) $ an  
$i$-string if it is of the form 
\begin{equation}
S = \{ \tilde{f}^k_i b \, | \, k \geq 0, \text{ where } b \in B(\la ) 
\text{ satisfies } \tilde{e}_i b = 0 \} .
\end{equation} 
We call $ b$ the highest weight vector of $ S $. The key ``Demazure
string property'' of these 
$i$-strings is then the following: for any $i$-string 
$ S \subset B( \infty ) $ we have that
\begin{equation}~\label{string}
B_w ( \infty ) \cap S \text{ is either } S \text{ or } \{ b \} 
\text { or the empty set } .
\end{equation} 
This is seen by combining (2) and (3). 

\subsection{}
The string property is also valid for $B(\lambda)$: to see this 
one defines for $ \la \in P $ 
the crystal on one element 
$ T_{\la } :=\{t_{\la}\} $ as follows:
\begin{gather*} wt_i ( t_{\la} ) = \langle \la , \al_i \rangle \,\,\,\,\,\,\,\,
\ve_i (  t_{\la} ) = - \infty ,  \,\,\,\,\,\,\,\, \varphi_i ( t_{\la} ) = 
-\infty ,\\
 \tilde{e}_i ( t_{\la} ) = 0 = \tilde{f}_i ( t_{\la} ) .
\end{gather*}
Let $ \la \in P^+ $. Then $ u_{\la} \mapsto u_{\infty } \otimes t_{\la} $
defines an embedding of crystals $ \io_{\lambda}: B(\la ) \mapsto B(\infty ) \otimes 
T_{\la} $ that commutes with the $ \tilde{e}_i $'s. 

\medskip
Now, $ B_w( \la ) $ is the inverse image of $ B_w ( \infty ) \otimes T_{\la} $
under $ \io_{\lambda}$. Furthermore, the inverse image under $ \io_{\lambda}$ of an 
$ i $-string for $ B(\infty ) $ is an $ i $-string for $ B(\la ) $.
Thus (\ref{string}) implies the string property for $B(\lambda)$.
\medskip

\subsection{}~\label{123}
For completeness we now include Kashiwara's proof of the following 
lemma.
\medskip

\begin{Lemma}~\label{illu1} 
The refined Demazure formula (\ref{demazureformel}) follows from the
string property for $ B(\lambda) $.
\end{Lemma}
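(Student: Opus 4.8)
The plan is to argue by induction on the length $ n $ of the reduced expression $ s_{i_n} s_{i_{n-1}} \cdots s_{i_1} $ of $ w $. The base case $ w = 1 $ is immediate, since $ B_1(\la) = \{ u_{\la} \} $ and the empty product of Demazure operators fixes $ u_{\la} $. For the inductive step I would set $ w' := s_{i_n} w = s_{i_{n-1}} \cdots s_{i_1} $, so that the recursive definition gives $ B_w(\la) = \bigcup_k \tilde{f}^k_{i_n} B_{w'}(\la) $ and in particular $ B_{w'}(\la) \subseteq B_w(\la) $. Assuming the formula for $ w' $, it then suffices to establish the one-step identity
$$ \sum_{b \in B_w(\la)} b = {\mathcal D}_{i_n} \Big( \sum_{b \in B_{w'}(\la)} b \Big) . $$

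I would analyse both sides by decomposing $ B(\la) $ into its $ i_n $-strings, using that $ \tilde{e}_{i_n} $ and $ \tilde{f}_{i_n} $ preserve each such string. Fix an $ i_n $-string $ S $ with highest weight vector $ s $ and length $ L = wt_{i_n}(s) \geq 0 $, so that $ S = \{ s, \tilde{f}_{i_n} s, \ldots, \tilde{f}^L_{i_n} s \} $. By the string property for $ B_{w'}(\la) $ the intersection $ B_{w'}(\la) \cap S $ is one of $ \emptyset $, $ \{ s \} $, or $ S $. In the first case no element of $ S $ lies in $ B_{w'}(\la) $, and since $ \tilde{f}_{i_n} $ cannot carry an element of another string into $ S $, we get $ B_w(\la) \cap S = \emptyset $ and a zero contribution to both sides. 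In the other two cases $ s \in B_{w'}(\la) $, whence $ \tilde{f}^k_{i_n} s \in B_w(\la) $ for all $ k $ and therefore $ B_w(\la) \cap S = S $. Thus $ B_w(\la) $ is exactly the union of those full $ i_n $-strings that meet $ B_{w'}(\la) $, and the left-hand side contributes $ \sum_{b \in S} b $ for each such $ S $.

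It remains to check that the right-hand side contributes the same $ \sum_{b \in S} b $. When $ B_{w'}(\la) \cap S = \{ s \} $ this is immediate from the definition of $ {\mathcal D}_{i_n} $, since $ wt_{i_n}(s) = L \geq 0 $ gives $ {\mathcal D}_{i_n} s = \sum_{k=0}^{L} \tilde{f}^k_{i_n} s = \sum_{b \in S} b $; here the ``starting'' of the string in $ B_{w'} $ is completed by the operator, mirroring the way the recursion fills it out in $ B_w $. The genuinely computational case is $ B_{w'}(\la) \cap S = S $, where one must verify $ \sum_{m=0}^{L} {\mathcal D}_{i_n} (\tilde{f}^m_{i_n} s) = \sum_{b \in S} b $. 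Writing $ b_m = \tilde{f}^m_{i_n} s $ one has $ wt_{i_n}(b_m) = L - 2m $, so $ {\mathcal D}_{i_n} b_m $ either fills the string downward (when $ L - 2m \geq 0 $) or subtracts a partial upward string (when $ L - 2m < 0 $); tracking the coefficient of a fixed $ b_j $ shows that the positive and negative contributions cancel to leave coefficient $ 1 $ for every $ j $. This cancellation is the combinatorial heart of the argument and the step I expect to require the most care. Summing over all $ i_n $-strings then yields the one-step identity and closes the induction.
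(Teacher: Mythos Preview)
Your proposal is correct and follows essentially the same approach as the paper: induction on $l(w)$, reduction to the one-step identity, decomposition of $B(\la)$ into $i_n$-strings, and a case analysis using the string property for $B_{w'}(\la)$ together with the recursion $B_w(\la)\cap S=\bigcup_k\tilde f_{i_n}^k(B_{w'}(\la)\cap S)$. The only cosmetic difference is that the paper records the identity ${\mathcal D}_i S=S$ for a full $i$-string as a preliminary ``easy calculation'' before the induction, while you carry it out explicitly as the second nontrivial case; your coefficient-tracking argument is exactly that calculation.
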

\begin{proof}
If $ \tilde{e}_i b = 0 $ for $ b \in B(\la ) $ then clearly $ { \mathcal D }_i b $
is an $ i $-string having $ b $ as its highest weight vector. Moreover, 
an easy 
calculation shows that $ { \mathcal D }_i S = S $ for $ S $ any $i$-string. 
Now Theorem 2 of [K2] says that 
\begin{equation}
B(\la ) = \bigcup_{ k_i \geq 0, j_i \in I, m \geq 0 } 
\tilde{f}^{ k_m}_{j_m} \tilde{f}^{ k_{m-1}}_{j_{m-1}} \ldots 
\tilde{f}^{ k_{1}}_{j_{1}} u_{\la} .
\end{equation} 
Hence, $ B(\la ) $ is the disjoint union of $ i $-strings for any $ i
\in I $, since 
$i$-strings are either disjoint or coincide. 

\medskip
We now prove (\ref{demazureformel}) by induction on $ l(w)$. 
We thus assume the formula for 
$ s_{i_n} w = s_{i_{n-1}}  s_{i_{n-2}} \ldots  s_{i_{1}} $ and need to 
check the equality 
\begin{equation}~\label{equality}
\sum_{b \in B_w (\la )  } b = {\mathcal D }_{i_n} 
\Big( \sum_{b \in B_{s_{i_n} w} (\la )} b \, \Big).
\end{equation}
As $ {\mathcal D }_i $ leaves any $ i $-string invariant it is enough to 
verify the following equality
\begin{equation}~\label{stringequality}
\sum_{b \in B_w (\la ) \cap S } b = {\mathcal D }_{i_n} 
\Big( \sum_{b \in B_{s_{i_n}w }(\la ) \cap S } b \, \Big) .
\end{equation}
Now (\ref{string}) severely restricts the shape of these intersections, and 
even further restrictions are imposed by the condition 
$$ B_w ( \la ) \cap S = \bigcup_{k} \tilde{f}^k_{i_n} ( B_{s_{i_n} w} (\la)
\cap S ), $$
which is a consequence of the definitions. All together, we are left with 
only three possibilities, namely 
\begin{enumerate} 
\item $ B_w (\la ) \cap S = B_{s_{i_n}w} ( \la ) \cap S = \emptyset $,
\item $ B_w (\la ) \cap S = B_{s_{i_n}w} ( \la ) \cap S = S $,
\item $ B_w (\la ) \cap S = S \text{  and  } 
 B_{s_{i_n}w} ( \la ) \cap S = \{b\}  \text{  where } \tilde{e}_i b = 0 .$
\end{enumerate}

\noindent
In all three cases it is straightforward to 
check that (\ref{stringequality}) holds true. 
\end{proof}

We have thus reduced ourselves to the verification of (1), (2) and (3) of 
\ref{threeproperties}. Kashiwara proves (1) and (2) by realizing the $ B_w (\la)$'s as 
crystals of the Demazure modules whereas the 
proof of the string property (3) relies on the combinatorial 
properties of the operators $ \tilde{e}_i^{\ast} $ and $ \tilde{f}_i^{\ast} $
together with (1) and (2).

\medskip 
Here we shall demonstrate that (1) and (2) can be obtained in the same
combinatorial spirit that is employed for (3), that is without relying on 
an interpretation of $ B_w ( \la ) $'s as crystals for any modules. 
Now it is known that Littelmanns's Path model is equivalent to 
the crystal combinatorics, see eg. [J1] and references therein, and 
that (2) and (3) (which suffice to obtain the string property
(\ref{string}))
can be obtained in that setting, [L2]. Still, Joseph remarks on page 
181 in [J1] that it seems extremely difficult to establish (2) 
purely combinatorially.

\section{Properties of $B_w( \infty ) $ }

\subsection{}

Recall the injective morphism $ \Psi_i : B(\infty ) \rightarrow 
B(\infty ) \otimes B_i $ from the previous section.
Using its  properties (\ref{eq1}), (\ref{eq2}) and (\ref{eq3}) one can obtain information 
about the commutation of $ \tilde{f}_i $ and $ \tilde{e}_i $; this is
illustrated by the following lemma.

\medskip

\begin{Lemma}~\label{illu} For any $ i,j \in I $ and $ b \in B(\infty ) $ we have
$$ \bigcup_{k,n} \tilde{f}_i^n \tilde{f}_j^{ \ast k } b = 
   \bigcup_{k,n}  \tilde{f}_j^{ \ast k } \tilde{f}_i^n b. $$
\end{Lemma}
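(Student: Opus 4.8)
The plan is to reduce the identity to the two cases $i=j$ and $i\neq j$, and in each case to analyze how the operators $\tilde{f}_i^n$ and $\tilde{f}_j^{\ast k}$ interact on the tensor factors after applying $\Psi_j$. When $i=j$ the statement is essentially trivial, since both sides are the union over all $n,k$ of $\tilde{f}_i^n \tilde{f}_i^{\ast k} b$, so from the outset I would assume $i \neq j$ and focus on that case. The governing principle is that property (\ref{eq2}) converts an application of $\tilde{f}_j^{\ast}$ into an ordinary application of $\tilde{f}_j$ on the \emph{second} tensor factor $b_j \in B_j$ after embedding via $\Psi_j$, whereas property (\ref{eq3}) shows that the honest operator $\tilde{f}_i$ commutes with $\Psi_j$ and thus acts entirely on the \emph{first} tensor factor (using $i\neq j$, so that $\tilde{f}_i$ leaves the $B_j$-component untouched under the tensor product rule). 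The heart of the matter is therefore to show that, inside $B(\infty)\otimes B_j$, the two families of operators act on essentially independent coordinates, so the two unions coincide.

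Concretely, I would first apply $\Psi_j$ to both sides; since $\Psi_j$ is an injective morphism of crystals, it suffices to prove the equality of the images. Writing $\Psi_j(b) = b' \otimes b''$, repeated use of (\ref{eq2}) gives
\begin{equation*}
\Psi_j\big(\tilde{f}_j^{\ast k} b\big) = b' \otimes \tilde{f}_j^{\,k} b'' ,
\end{equation*}
so that the $\tilde{f}_j^{\ast}$-powers only modify the $B_j$-factor. Then I would analyze the action of $\tilde{f}_i^n$ on such a tensor $b' \otimes \tilde{f}_j^k b''$ via the tensor product rule for $\tilde{f}_i$ recalled in Section 2. Here one must invoke (\ref{eq3}) to push $\tilde{f}_i$ through $\Psi_j$, and the key point is that for $i \neq j$ the operator $\tilde{f}_i$ never acts on the $B_j$-factor: the crystal $B_j$ has $\tilde{f}_i b_j = 0$ for $i\neq j$, so by the tensor product rule $\tilde{f}_i$ always lands on the first factor $b'$. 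Consequently both $\bigcup_{k,n}\tilde{f}_i^n \tilde{f}_j^{\ast k} b$ and $\bigcup_{k,n}\tilde{f}_j^{\ast k}\tilde{f}_i^n b$ get identified, under $\Psi_j$, with the set of all $(\tilde{f}_i^{n} b') \otimes (\tilde{f}_j^{k} b'')$ as $n,k$ range over $\mathbb{N}$, with the understanding that terms become $0$ once the respective factor is annihilated.

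The main obstacle I anticipate is the bookkeeping around the tensor product rule when the operator $\tilde{f}_i$ is applied to $b'\otimes \tilde{f}_j^k b''$: one needs to verify that, although the rule for $\tilde{f}_i(b_1\otimes b_2)$ depends on the comparison of $\varphi_i(b_1)$ and $\epsilon_i(b_2)$, the choice of the $B_j$-factor $\tilde{f}_j^k b''$ does not affect \emph{which} factor $\tilde{f}_i$ acts on, precisely because for $i\neq j$ the values $\epsilon_i$ and $\varphi_i$ on the elements of $B_j$ are fixed (indeed $\tilde{e}_i, \tilde{f}_i$ annihilate $B_j$), so $\tilde{f}_i$ is forced onto the first factor regardless of $k$. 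Once this independence of coordinates is established, the equality of the two unions is immediate because a union over a product index set is insensitive to the order in which the two families of operators are applied. The remaining care is simply to handle the boundary terms where an operator produces $0$, which are absorbed harmlessly into the unions by convention.
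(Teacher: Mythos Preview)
Your proposal has the two cases exactly reversed. In the paper (and in fact), the case $i\neq j$ is the trivial one: Kashiwara's Corollary~2.2.2 in [K1] says that $\tilde f_i$ and $\tilde f_j^{\ast}$ literally commute when $i\neq j$, and your $\Psi_j$ argument is just an alternative way of seeing this commutation. So that part is fine but unnecessary.

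The real content of the lemma is the case $i=j$, which you dismissed as ``essentially trivial''. It is not. The operators $\tilde f_i$ and $\tilde f_i^{\ast}$ do \emph{not} commute: after applying $\Psi_i$, the operator $\tilde f_i^{\ast}$ always acts on the $B_i$-factor, whereas $\tilde f_i$ acts via the tensor product rule and so acts on the first factor when $\varphi_i(b_0)>\epsilon_i(\tilde f_i^{\,m}b_i)$ and on the second factor otherwise. In particular which factor $\tilde f_i$ hits depends on $m$, so applying $\tilde f_i^{\ast}$ first can change where a subsequent $\tilde f_i$ lands. Your sentence ``both sides are the union over all $n,k$ of $\tilde f_i^n\tilde f_i^{\ast k}b$'' is only a restatement of the left-hand side; the right-hand side has the operators in the opposite order, and the equality of the two unions is precisely what must be proved. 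The paper establishes it by a direct analysis of the crystal graph of $B(\infty)\otimes B_i$, showing that both unions describe the same ``infinite rectangle'' of vertices. That analysis, not the $i\neq j$ argument, is the substance of the proof and is missing from your proposal.
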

\begin{proof}
If $ i \not= j $ then by Corollary 2.2.2 of [K1] $ \tilde{f}_i $ and 
$ \tilde{f}_j^{\ast} $ commute and there is nothing to prove. So we 
assume $ i=j $. Write 
$$ \Psi_i ( b ) = b_0 \otimes \tilde{f}_i^m b_i, $$ 
and let $ \va := \va_i ( b_0 ) $ and $ \ve := m $. Now, $ \Psi_i $ is an 
embedding so to show the equality of the lemma it is enough to see that 
both sides have the same image under $ \Psi_i $. So we replace $ b $ by 
$ b_0 \otimes \tilde{f}_i^m b_i $ and keep in mind that the action of 
$ \tilde{f}_j^{ \ast k } $ is on the right factor while $\tilde{f}_i $ 
acts as on a tensor product. 

\medskip
Let now $ \Psi_i (b) = b_0 \otimes \tilde{f}_i^m b_i $ be represented as
a point in the crystal graph associated to $ B(\infty ) \otimes B_i $. The 
crystal graph is a representation of the action of $ \tilde{f}_i $ on 
$ B(\infty ) \otimes B_i $, so there is an arrow between two points in 
the graph if $ \tilde{f}_i $ carries the corresponding crystal elements 
to each other. 

\medskip

If $ \va \leq m $ the action of $\tilde{f}_i $ is on the second factor 
and there is a horizontal arrow leaving $  b_0 \otimes \tilde{f}_i^m b_i $ 
and if $ \va > m $ there is a vertical arrow leaving 
$  b_0 \otimes \tilde{f}_i^m b_i $  

\medskip

One typically gets a picture as the following one.

\medskip 

\unitlength0.1cm
\begin{center}
\begin{picture}(10,20)(0,0)
\put (-50,0) {\line(1,0){110}}
\put (-40,10){\line(0,-1){60}}
\put (-35,5) { $ B_i $ }
\put (-55,-7) { $ B(\infty) $ }
\put ( -37,-25) { $ \begin{array}{ccccccccccccccc}
\ast && \ast && \ast && \ast & \rightarrow & \ast & \rightarrow &
\ast & \rightarrow & \ast & \rightarrow & \ast \\
\da && \da && \da && &&&&&&&& \\
 \ast && \ast && \ast & \rightarrow & \ast & \rightarrow &
\ast & \rightarrow & \ast & \rightarrow & \ast &  \rightarrow   & \ast   \\    \da && \da &&  && &&&&&&&& \\ 
\ast && \ast & \rightarrow & \ast & \rightarrow & \ast & \rightarrow &
\ast & \rightarrow & \ast & \rightarrow & \ast &  \rightarrow   & \ast \\
\da &&  &&  && &&&&&&&& \\
\ast & \rightarrow & \ast & \rightarrow & \ast & \rightarrow & 
\ast & \rightarrow &
\ast & \rightarrow & \ast & \rightarrow & \ast &  \rightarrow   & \ast          \end{array} $ }                      
\end{picture}
\end{center}
\vskip5cm

\noindent
The subset of $ B( \la ) $ ,
$$ \bigcup_{k} \tilde{f}_i^k ( b_0 \otimes \tilde{f}_i^m b_i ), $$
is represented by the points of the graph that can be hit by a 
sequence of arrows starting in $ b_0 \otimes \tilde{f}_i^m b_i $. 

\medskip

On the other hand the action of $ \tilde{f}_i^{\ast} $ is always on the 
second factor of the tensor product, so $ \tilde{f}_i^{\ast} $ always 
takes a point in the graph to its right neighbour. Using this information
one can now calculate the two sides of the lemma; in both cases one 
gets the infinite rectangle whose upper left corner is 
$ \Psi_i ( b) = b_0 \otimes \tilde{f}_i^m b_i $ and whose lower left 
corner is the point below $ b_0 \otimes \tilde{f}_i^m b_i $ in which the 
arrows change direction. The lemma is proved.
\end{proof}

\subsection{}

We can use the above to show the following result.
\medskip

\begin{Theorem}~\label{union}
$B_w (\infty ) = \bigcup_{k_1, \ldots k_n} 
\tilde{f}_{i_1}^{\ast k_1} \ldots \tilde{f}_{i_n}^{\ast k_n} u_{\infty}$.
\end{Theorem}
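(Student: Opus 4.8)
The plan is to prove the identity by induction on $n = l(w)$, using the recursive definition of $B_w(\infty)$ together with Lemma~\ref{illu} and the behaviour of the two families of operators on the highest weight vector $u_\infty$. The base case $w = 1$ is immediate, since $B_1(\infty) = \{u_\infty\}$ agrees with the empty product on the right-hand side. For the inductive step I set $w' = s_{i_n} w$, which has reduced expression $s_{i_{n-1}} \cdots s_{i_1}$. By the induction hypothesis and the definition $B_w(\infty) = \bigcup_{k_n} \tilde{f}_{i_n}^{k_n} B_{w'}(\infty)$, we obtain
$$
B_w(\infty) = \bigcup_{k_1,\ldots,k_n} \tilde{f}_{i_n}^{k_n}\, \tilde{f}_{i_1}^{\ast k_1} \cdots \tilde{f}_{i_{n-1}}^{\ast k_{n-1}} u_\infty .
$$
The task is therefore to move the \emph{unstarred} operator $\tilde{f}_{i_n}^{k_n}$ from the outermost position to the innermost one, converting it into a \emph{starred} operator along the way.

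The first part is the commutation. I would move $\bigcup_{k_n}\tilde{f}_{i_n}^{k_n}$ rightwards past the starred operators one at a time. Freezing all indices except $k_n$ and the index $k_m$ of the starred operator being crossed, each single transposition is exactly an instance of Lemma~\ref{illu}: for $i_n \neq i_m$ the operators commute by Corollary~2.2.2 of [K1], and for $i_n = i_m$ the required interchange of unions is precisely the content of Lemma~\ref{illu}. The already-commuted operators $\tilde{f}_{i_1}^{\ast},\ldots,\tilde{f}_{i_{m-1}}^{\ast}$ sit to the left and are simply applied to the interchanged block, while the remaining operators to the right only fix the element $b$ to which the lemma is applied; since a union distributes over any Kashiwara operator, taking the union over the frozen indices afterwards is harmless. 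After $n-1$ such steps we arrive at
$$
B_w(\infty) = \bigcup_{k_1,\ldots,k_n} \tilde{f}_{i_1}^{\ast k_1} \cdots \tilde{f}_{i_{n-1}}^{\ast k_{n-1}}\, \tilde{f}_{i_n}^{k_n} u_\infty .
$$

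It remains to convert the innermost operator, i.e.\ to show $\tilde{f}_{i_n}^{k} u_\infty = \tilde{f}_{i_n}^{\ast k} u_\infty$ for all $k$ (in fact elementwise). This is where (\ref{eq1})--(\ref{eq3}) enter. Applying $\Psi_{i_n}$ and using (\ref{eq3}) gives $\Psi_{i_n}(\tilde{f}_{i_n}^{k} u_\infty) = \tilde{f}_{i_n}^{k}(u_\infty \otimes b_{i_n})$, and because $\varphi_{i_n}(u_\infty)=0$ never exceeds $\varepsilon_{i_n}$ of the relevant element of $B_{i_n}$, the tensor product rule keeps the action on the second factor, so this equals $u_\infty \otimes \tilde{f}_{i_n}^{k} b_{i_n}$. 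On the other hand, iterating (\ref{eq2}) from $\Psi_{i_n}(u_\infty)=u_\infty\otimes b_{i_n}$ yields $\Psi_{i_n}(\tilde{f}_{i_n}^{\ast k} u_\infty) = u_\infty \otimes \tilde{f}_{i_n}^{k} b_{i_n}$ as well. Since $\Psi_{i_n}$ is injective the two elements coincide, and substituting this identity into the display above completes the induction.

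The step I expect to be the main obstacle is the bookkeeping in the commutation: one must verify that Lemma~\ref{illu}, a statement about the two unions attached to a \emph{single} element $b$, can legitimately be applied repeatedly inside the large multiple union, so that interchanging a given $\bigcup_{k_n}$ past a $\bigcup_{k_m}$ does not disturb the indices already put in place. Confirming that each transposition leaves the already-commuted operators untouched, and hence that the procedure terminates with the operators in exactly the order $\tilde{f}_{i_1}^{\ast},\ldots,\tilde{f}_{i_n}^{\ast}$, is the delicate point; by contrast the conversion at $u_\infty$ is a short and self-contained computation with the tensor product rule.
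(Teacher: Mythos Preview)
Your proof is correct and follows essentially the same approach as the paper: both arguments rest on the identity $\tilde{f}_i^{k} u_\infty = \tilde{f}_i^{\ast k} u_\infty$ at the highest weight vector together with repeated use of Lemma~\ref{illu} to reorder the operators. The only differences are cosmetic---the paper converts the innermost operator first and then pushes the starred operator leftward through the unstarred ones (iterating this from $i_1$ up to $i_n$), whereas you phrase it as an induction, push the outermost unstarred operator rightward, and convert last; since Lemma~\ref{illu} is an equality of unions this direction is immaterial, and your bookkeeping concern is handled exactly as you say.
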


\begin{proof}
By definition $\tilde{f}_{i}^{\ast k} u_{\infty} = 
\tilde{f}_{i}^{ k} u_{\infty} $ for all $ k $ and all $ i $. So we get that
$$B_w (\infty ) = \bigcup_{k_1, \ldots k_n} 
\tilde{f}_{i_n}^{ k_n} \ldots \tilde{f}_{i_2}^{ k_2}
\tilde{f}_{i_1}^{\ast k_1} u_{\infty}.$$
Using Lemma~2.1 we can move $\tilde{f}_{i_1}^{\ast k_1}$ to the front 
position. We then proceed with $\tilde{f}_{i_2}^{ k_2}$ etc. The 
theorem is proved.
\end{proof}

\subsection{}
We can now deduce the property (1) of $ B_w ( \infty ) $:

\begin{Corollary} $ B_w^{\ast}( \infty )  = B_{w^{-1}}(\infty) $.
\end{Corollary}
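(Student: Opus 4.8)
The plan is to read everything off from Theorem~\ref{union}, applied to the fixed reduced expression $ s_{i_n} s_{i_{n-1}} \ldots s_{i_1} $ of $ w $, which gives
$$ B_w(\infty) = \bigcup_{k_1, \ldots, k_n} \tilde{f}_{i_1}^{\ast k_1} \ldots \tilde{f}_{i_n}^{\ast k_n} u_{\infty} . $$
I would then apply the $\ast$-operation to this entire set. Since $\ast$ is an involutive bijection of $ B(\infty) $, it commutes with unions, so it suffices to compute the image of a single element $ \tilde{f}_{i_1}^{\ast k_1} \ldots \tilde{f}_{i_n}^{\ast k_n} u_{\infty} $ under $\ast$.

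For this I would invoke three elementary facts about the $\ast$-operation: it is an involution; it satisfies $ (\tilde{f}_i^{\ast} b)^{\ast} = \tilde{f}_i (b^{\ast}) $, which is essentially the definition $ \tilde{f}_i^{\ast} = \ast \circ \tilde{f}_i \circ \ast $; and it fixes $ u_{\infty} $, since $ u_{\infty} $ is the unique element of weight $ 0 $. Pushing the outer $\ast$ successively past each starred operator converts every $ \tilde{f}_{i_j}^{\ast} $ into $ \tilde{f}_{i_j} $ \emph{without} reversing their order, and the surviving $\ast$ is absorbed by $ u_{\infty} $. This yields
$$ B_w^{\ast}(\infty) = \bigcup_{k_1, \ldots, k_n} \tilde{f}_{i_1}^{k_1} \ldots \tilde{f}_{i_n}^{k_n} u_{\infty} . $$

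Finally I would recognise the right-hand side as $ B_{w^{-1}}(\infty) $. Indeed $ w^{-1} = s_{i_1} s_{i_2} \ldots s_{i_n} $ is a reduced expression, and unfolding the recursive definition of $ B_{w^{-1}}(\infty) $ with respect to exactly this expression produces $ \bigcup \tilde{f}_{i_1}^{k_1} \tilde{f}_{i_2}^{k_2} \ldots \tilde{f}_{i_n}^{k_n} u_{\infty} $ (here $ \tilde{f}_{i_1} $ is the outermost operator, corresponding to the leftmost reflection $ s_{i_1} $). After renaming the summation indices this matches the displayed formula for $ B_w^{\ast}(\infty) $, completing the argument.

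The only point requiring care is the dependence on the chosen reduced expression: the computation identifies $ B_w^{\ast}(\infty) $ with $ B_{w^{-1}}(\infty) $ computed for the reverse of the expression fixed for $ w $. As independence of the reduced expression has not yet been established at this stage, I would read the corollary for these matched expressions; this is harmless, and in fact this identity is one of the ingredients that feeds into the later independence statement. I do not expect any genuine obstacle: the whole proof is a purely formal manipulation of the $\ast$-operation layered on top of Theorem~\ref{union}.
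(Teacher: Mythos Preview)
Your proposal is correct and essentially the same argument as the paper's. The only cosmetic difference is the direction: the paper starts from the recursive description $b=\tilde{f}_{i_n}^{k_n}\cdots\tilde{f}_{i_1}^{k_1}u_\infty$ of $B_w(\infty)$, applies $\ast$ to obtain $b^\ast=\tilde{f}_{i_n}^{\ast k_n}\cdots\tilde{f}_{i_1}^{\ast k_1}u_\infty$, and then invokes Theorem~\ref{union} for $w^{-1}$, whereas you invoke Theorem~\ref{union} for $w$ first and land in the recursive description of $B_{w^{-1}}(\infty)$. Your remark about reading the statement relative to matched reduced expressions is exactly the caveat implicit in the paper as well.
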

\begin{proof}
Let $ b \in B_w(\infty) $, i.e. $ b=\tilde{f}_{i_n}^{ k_n} \ldots 
\tilde{f}_{i_1}^{ k_1} u_{\infty}$ for some $ k_1, \ldots k_n $. The 
definition of $ \tilde{f}_i^{\ast} $ then gives that 
$$ b^{\ast} = \tilde{f}_{i_n}^{\ast k_n}
\tilde{f}_{i_{n-1}}^{\ast k_{n-1}}\ldots 
\tilde{f}_{i_1}^{\ast k_1} u_{\infty} $$
But from Theorem \ref{union} we see that $ b^{\ast} \in B_{w^{-1}} (\infty ) $
and the corollary is proved.
\end{proof}

\subsection{}
We shall now consider the property (2). To that end we prove the 
following lemma
\medskip
\begin{Lemma}~\label{ecomm} 
For all $i,j \in I  $ and for all $ b \in B(\infty ) $ 
we have that
$$ \tilde{e}_i  \, \bigcup_k \tilde{f}_j^{\ast k} b \,\subset \, 
\bigcup_k \tilde{f}_j^{\ast k} \tilde{e}_i b \, \cup \, 
\bigcup_k \tilde{f}_j^{\ast k} b \, \cup \, \{ 0 \} $$
\end{Lemma}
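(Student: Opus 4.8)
The plan is to follow the strategy of Lemma \ref{illu}, reducing the statement to a computation inside the crystal graph of $B(\infty)\otimes B_i$ via the embedding $\Psi_i$. First I would dispose of the case $i\neq j$: by Corollary 2.2.2 of [K1] the operators $\tilde{e}_i$ and $\tilde{f}_j^{\ast}$ commute, so $\tilde{e}_i\bigcup_k\tilde{f}_j^{\ast k}b=\bigcup_k\tilde{f}_j^{\ast k}\tilde{e}_ib$, and this already lies in the first set on the right-hand side, together with $\{0\}$ to absorb any vanishing terms. This leaves the essential case $i=j$.

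For $i=j$, since $\Psi_i$ is injective it suffices to verify the inclusion after applying $\Psi_i$. Writing $\Psi_i(b)=b_0\otimes\tilde{f}_i^m b_i$ and putting $\va:=\va_i(b_0)$, property (\ref{eq2}) shows that $\tilde{f}_i^{\ast}$ acts on the second tensor factor, so the $\ast$-string $\bigcup_k\tilde{f}_i^{\ast k}b$ is carried to the horizontal ray $\{\,b_0\otimes\tilde{f}_i^{m+k}b_i : k\geq 0\,\}$, while property (\ref{eq3}) identifies $\tilde{e}_i$ with the tensor-product Kashiwara operator. As in the picture accompanying Lemma \ref{illu}, I would view this ray as a single row of the grid and apply $\tilde{e}_i$ point by point, using the tensor product rule to decide, for each $b_0\otimes\tilde{f}_i^{m+k}b_i$, whether $\tilde{e}_i$ acts on the first factor (the regime $\va\geq m+k$) or on the second factor (the regime $\va<m+k$).

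The two regimes account for the two sets on the right. Where $\tilde{e}_i$ acts on the first factor the result is $\tilde{e}_i b_0\otimes\tilde{f}_i^{m+k}b_i$, which by (\ref{eq3}) and (\ref{eq2}) equals $\Psi_i(\tilde{f}_i^{\ast k}\tilde{e}_i b)$ and so lands in $\bigcup_k\tilde{f}_i^{\ast k}\tilde{e}_i b$, with $\{0\}$ covering the possibility $\tilde{e}_i b_0=0$. Where $\tilde{e}_i$ acts on the second factor the result is $b_0\otimes\tilde{f}_i^{m+k-1}b_i$, a one-step leftward shift along the row; for $k\geq 1$ this stays on the original ray and hence lies in $\bigcup_k\tilde{f}_i^{\ast k}b$. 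The delicate point, and the step I expect to be the main obstacle, is the head of the ray in the regime $\va<m$, where even $k=0$ forces $\tilde{e}_i$ onto the second factor and produces $b_0\otimes\tilde{f}_i^{m-1}b_i$, a point strictly to the left of $\Psi_i(b)$ that is not on the original ray. I would settle this by noting that in precisely this regime (\ref{eq3}) gives $\Psi_i(\tilde{e}_i b)=b_0\otimes\tilde{f}_i^{m-1}b_i$, so the entire image of the row is the $\ast$-string $\bigcup_k\tilde{f}_i^{\ast k}\tilde{e}_i b$; thus the leftward-shifted points are translates of $\tilde{e}_i b$ rather than of $b$. Keeping careful track of this boundary between the two regimes, and matching each shifted point with the correct $\tilde{f}_i^{\ast}$-translate through (\ref{eq2}), is the crux; once it is in place the inclusion follows in every case.
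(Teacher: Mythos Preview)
Your proposal is correct and follows essentially the same route as the paper: reduce to $i=j$, push everything through $\Psi_i$ into $B(\infty)\otimes B_i$, and analyse the action of $\tilde e_i$ on the horizontal ray $\{b_0\otimes\tilde f_i^{m+k}b_i:k\ge 0\}$ via the tensor product rule. The only difference is organisational: the paper splits into four cases according to the directions of the arrows entering and leaving $b$ in the crystal graph, whereas you split directly into the two regimes $\varphi\ge m+k$ and $\varphi<m+k$ (with the boundary case $k=0$, $\varphi<m$ handled separately); both bookkeepings yield the same inclusion.
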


\begin{proof}
Again only the case $ i= j $ is nontrivial; otherwise $ \tilde{e}_i $ 
and $ \tilde{f}_j^{\ast } $ commute. We apply the morphism $ \Psi_i $ to 
both sides of the lemma and can then check the inclusion in the crystal 
graph:

\medskip 

\unitlength0.1cm
\begin{center}
\begin{picture}(10,10)(0,0)
\put (-50,0) {\line(1,0){110}}
\put (-40,10){\line(0,-1){60}}
\put (-35,5) { $ B_i $ }
\put (-55,-7) { $ B(\infty) $ }
\put ( -37,-25) { $ \begin{array}{ccccccccccccccc}
\ast && \ast && \ast && \ast & \rightarrow & \ast & \rightarrow &
\ast & \rightarrow & \ast & \rightarrow & \ast \\
\da && \da && \da && &&&&&&&& \\
 \ast && \ast && \ast & \rightarrow & \ast & \rightarrow &
\ast & \rightarrow & \ast & \rightarrow & \ast &  \rightarrow   & \ast   \\    \da && \da &&  && &&&&&&&& \\ 
\ast && \ast & \rightarrow & \ast & \rightarrow & \ast & \rightarrow &
\ast & \rightarrow & \ast & \rightarrow & \ast &  \rightarrow   & \ast \\
\da &&  &&  && &&&&&&&& \\
\ast & \rightarrow & \ast & \rightarrow & \ast & \rightarrow & 
\ast & \rightarrow &
\ast & \rightarrow & \ast & \rightarrow & \ast &  \rightarrow   & \ast          \end{array} $ }                      
\end{picture}
\end{center}
\vskip5cm

The graph is infinite to the right. We understand that 
$ \tilde{e}_i b = 0 $ if there is no arrow ending at the point corresponding
to $ b $. Again, $ \tilde{f}^{\ast}_i $ acts by shifting a $ b $ to the 
right while $ \tilde{f}_i $ follows the arrows (and hence $ \tilde{e_i} $ 
follows the arrows in negative direction).

\medskip

Let us start out by verifying that there are no points missing in the 
above picture. So we must check that if the arrow leaving $ b $ is 
vertical and there is no arrow ending at $b$ then neither should there 
be any arrow ending at $ b$'s right neighbour.

\medskip

Let thus $ b $ be as indicated and write $ \Psi_i ( b ) = b_0 \otimes 
\tilde{f}_i^m b_i $. Then $ \va ( b_0 ) > \ve ( \tilde{f}^m_i b_i ) = m $
because the arrow leaving $ b $ is vertical. Now $ \tilde{e}_i ( b)  = 0 $ 
implies that $ \tilde{e}_i (b_0 ) = 0 $ because $ \Psi_i $ commutes with 
$ \tilde{e}_i $ and no element of $ B_i $ is mapped to $ 0 $ under 
$ \tilde{e}_i $. Since $ \va ( b_0 ) \geq 
\ve ( \tilde{f}_i^{m+1} b_i ) $ we indeed get that 
$$ \tilde{e}_i ( \Psi_i ( \tilde{f}_i^{\ast} b )) = 
\tilde{e}_i ( b_0 \otimes \tilde{f}_i^{m+1} b_i ) = 
\tilde{e}_i  b_0 \otimes \tilde{f}_i^{m+1} b_i  = 0 $$

\medskip

We now split the verification of the lemma into several cases. 
Firstly we 
consider the case of a $ b $ with $ \tilde{e}_i (b) = 0 $.
Then the left hand side of the lemma consists of those points in the row of 
$ b $ from which a horizontal arrow is leaving. But this is contained in 
the right hand side of the lemma.


\medskip

Then we consider the case of a vertical arrow entering and 
a vertical arrow leaving $ b $. In that case the left hand side of the lemma
consists of all the 
points that are positioned to the right of $ b$ (including $ b $ itself)
together with the points in the row above $ b $ that have an arrow leading 
into one of the first points. In addition, the right hand side consists of the 
first points together with their upper neighbours. Thus the inclusion
also holds in this case.

\medskip

We then consider the case of a vertical arrow entering and a
horizontal arrow leaving $b$.
Then the left hand side of the lemma consists of the points positioned to the
right of $b$ together 
with $b$ itself and its immediate predecessor. This is contained in the right 
hand side of the lemma (only the $ k = 0 $ part of the first union is needed).

\medskip

Finally we consider the case of horizontal arrows entering as well 
as leaving $b$. In that case the left hand side consists of all points
to the right of $b$ together with $b$'s immediate predecessor, which
is included in the right hand side (only the first union is needed).
\end{proof}

\subsection{}

We can now show the property (2) of $ B_w(\infty ) $:

\begin{Theorem} For $ i \in I $ we have $ \tilde{e}_i B_w ( \infty ) 
\subset  B_w ( \infty ) \cup\{ 0 \} $ 
\end{Theorem}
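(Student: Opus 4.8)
The plan is to combine Theorem \ref{union} with Lemma \ref{ecomm} in an induction on the length $l(w)$. The role of Theorem \ref{union} is crucial here: it rewrites $B_w(\infty)$ entirely in terms of the starred operators $\tilde{f}_i^{\ast}$, which is precisely the form to which Lemma \ref{ecomm} applies. Throughout, I would fix a reduced expression $s_{i_n}\cdots s_{i_1}$ of $w$ and understand $B_w(\infty)$ to be defined relative to it, so that the induction stays within the framework established before independence of the reduced expression is proved.

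For the base case $w=1$ one has $B_1(\infty)=\{u_\infty\}$, and since $\tilde{e}_i u_\infty = 0$ for every $i$, the inclusion is immediate. For the inductive step I would peel off the \emph{outermost} starred operator, writing via Theorem \ref{union}
$$B_w(\infty) = \bigcup_{k_1} \tilde{f}_{i_1}^{\ast k_1} B', \qquad B' := \bigcup_{k_2,\ldots,k_n} \tilde{f}_{i_2}^{\ast k_2}\cdots \tilde{f}_{i_n}^{\ast k_n} u_\infty.$$
The set $B'$ is again of the shape supplied by Theorem \ref{union}, namely $B' = B_{w s_{i_1}}(\infty)$ for the reduced expression $s_{i_n}\cdots s_{i_2}$ of the shorter element $w s_{i_1}$; hence the induction hypothesis yields $\tilde{e}_i B' \subset B' \cup \{0\}$.

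Now I would apply Lemma \ref{ecomm} (with $j=i_1$) elementwise to each $b\in B'$. Since $\tilde{e}_i$ is a map on elements and therefore commutes with the formation of unions, this gives
$$\tilde{e}_i B_w(\infty) \subset \bigcup_{k_1}\tilde{f}_{i_1}^{\ast k_1}\bigl(\tilde{e}_i B'\bigr) \,\cup\, \bigcup_{k_1}\tilde{f}_{i_1}^{\ast k_1} B' \,\cup\, \{0\}.$$
The middle term is exactly $B_w(\infty)$, and the first term, using $\tilde{e}_i B'\subset B'\cup\{0\}$, is contained in $\bigcup_{k_1}\tilde{f}_{i_1}^{\ast k_1}B' \cup \{0\} = B_w(\infty)\cup\{0\}$. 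Reassembling the pieces then closes the induction.

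The substantive work has already been carried out in Lemma \ref{ecomm}, so I do not expect a genuine obstacle here; the step that most needs care is the identification $B' = B_{w s_{i_1}}(\infty)$ — that is, checking that deleting the last letter of the reduced word again produces a reduced word to which Theorem \ref{union} applies, which is exactly what legitimizes the induction on $l(w)$. Beyond that, the only thing to watch is that the spurious $\{0\}$'s are harmless and that the two surviving unions genuinely recombine into $B_w(\infty)$.
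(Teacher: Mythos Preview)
Your proposal is correct and follows essentially the same route as the paper: induction on $l(w)$, using Theorem~\ref{union} to peel off the outermost starred operator $\tilde{f}_{i_1}^{\ast}$ so that $B_w(\infty)=\bigcup_{k_1}\tilde{f}_{i_1}^{\ast k_1}B_{ws_{i_1}}(\infty)$, then invoking Lemma~\ref{ecomm} together with the induction hypothesis. Your write-up simply spells out more of the mechanics (the base case, the identification $B'=B_{ws_{i_1}}(\infty)$, and the bookkeeping of the $\{0\}$'s) than the paper does.
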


\begin{proof}
We argue by induction on $ l(w) $ and thus assume the theorem for 
$ l( w ) -1 $. By Theorem \ref{union} 
$ B_w (\infty ) $ satisfies the equality 
$$ B_w ( \infty ) = \bigcup_{ k_1 } \tilde{f}_{i_1}^{\ast k_i } 
B_{ ws_{i_1} } ( \infty ) $$
By induction hypothesis $ \tilde{e}_i B_{ ws_{i_1} } ( \infty ) \subset 
B_{ ws_{i_1} } ( \infty ) \cup\{ 0 \} $. Combining this with lemma
\ref{ecomm} we obtain the induction step. The theorem is proved. 
\end{proof}

\section{The Braid Relations}
In this section we verify that the crystal Demazure operators ${\mathcal D }_i$
satisfy the braid relations on dominant weights. From this
it follows that $ B_w(\la) $ is independent of the choice 
of reduced expression for $w$. 
Note that Kashiwara 
has observed that the ${\mathcal D }_i$ do {\it not} satisfy 
the braid relations in general. 

\subsection{}
Since $W$ is a Weyl group, it is enough to check the
braid relations for $W$ of type $\mbox{A}_2, \mbox{B}_2 $ or $ \mbox{G}_2 $.
Indeed, for $$ w= w_1 s_{i_k} s_{i_{k-1}}  s_{i_k} w_2 = 
w_1  s_{i_{k-1}} s_{i_k}s_{i_{k-1}} w_2 $$ 
a braid relation of type $\mbox{A}_2 $ it is enough to
check the case $ w_1= 1$.
By the refined sum formula (\ref{demazureformel}) applied to $ w_2 $
one should then show that 
\begin{equation}{~\label{braid}}
{\mathcal D}_{i_k} {\mathcal D}_{i_{k-1}} 
{\mathcal D}_{i_k}\left( \sum_{b \in B_{w_2}(\la)} b\right) = 
{\mathcal D}_{i_{k-1}} {\mathcal D}_{i_{k}} {\mathcal D}_{i_{k-1}} \left(
\sum_{b \in B_{w_2}(\la)} b \right) .
\end{equation}
Using (\ref{demazureformel}) once more, the left hand side of this is the 
sum over all elements of $ B_{ s_k s_{k-1} s_{k} w_2 }(\la) $ while the right hand 
side is the sum over the elements of $ B_{ s_{k-1} s_{k} s_{k-1}w_2 }(\la) $.
We write $ w_2 = s_{i_l} s_{i_{l-1}} \ldots s_{i_1} $ and get then by repeated use of
Lemma~\ref{illu} like in Theorem~\ref{union} that
$$ B_{ s_k s_{k-1} s_k w_2 }(\la) = 
\bigcup_{k_1, \ldots k_l } 
\tilde{f}_{i_1}^{\ast k_1} \ldots \tilde{f}_{i_{l-1}}^{ \ast k_{l-1}}
\tilde{f}_{i_l}^{ \ast k_l} B_{ s_k s_{k-1} s_k  }(\la). $$
Similarly, the right hand side of (\ref{braid}) is the sum over 
$$\bigcup_{k_1, \ldots k_l } 
\tilde{f}_{i_1}^{\ast k_1} \ldots \tilde{f}_{i_{l-1}}^{ \ast k_{l-1}}
\tilde{f}_{i_l}^{ \ast k_l} B_{ s_k s_{k-1} s_k  }(\la). $$
The $ \mbox{A}_2 $-case of the braid relations then implies 
(\ref{braid}).
Similarly, one reduces the other braid relations to rank 2 cases.




\subsection{}
To check the $\mbox{A}_2, \mbox{B}_2 $ or $ \mbox{G}_2 $ cases, 
we appeal to the representation theoretical 
interpretation of $ B(\la) $ as basis at $ q=0 $ of the 
irreducible highest weight module $ V(\la) $ for the quantum
group $ U_q({\mathfrak g} ) $. 

Let us consider the $\mbox{A}_2 $-case
and write $ \la = ( \la_1, \la_2) $
in terms of the fundamental weights $ (\omega_1, \omega_2 ) $. 
Then 
$ \tilde{f}_1^{\la_2} u_{\la} $ is nonzero, since it is the lowest 
element of the $2$-string with highest element $ u_{\la} $. But
by weight considerations 
$ \tilde{f}_1^{\la_2} u_{\la} $ must be mapped to $ 0 $ under $ \tilde{e}_1 $ 
and therefore it is the highest element of the $ 1 $-string, whose
lowest element is 
$ \tilde{f}_2^{\la_1+\la_2} \tilde{f}_1^{\la_1} u_{\la} $ and 
especially nonzero. Continuing, we find that 
$$ \tilde{f}_1^{\la_2} \tilde{f}_2^{\la_1+\la_2} \tilde{f}_1^{\la_1} u_{\la}
\in B_{s_{1} s_{2} s_{1}}(\la)  \subset B(\la)  $$ is nonzero.
The lowest weight vector space of $ V(\la) $ is one
dimensional and so this element is the unique lowest element if $B(\la) $ .

Now, by (2) of (\ref{threeproperties}),
$ B_{s_{1} s_{2} s_{1}}(\la) $ is invariant under all the 
$ \tilde{e}_i $ operators. Since it moreover contains the 
lowest element, it must be equal to all of $ B(\la) $.
The same conclusion holds for $ B_{s_{2} s_{1} s_{2}}(\la) $ and 
then $ B_{s_{2} s_{1} s_{2}}(\la) =  B_{s_{1} s_{2} s_{1}}(\la) $ as 
promised.

\bigskip

\end{document}